\numberwithin{equation}{section}
\newtheorem{thm}{Theorem}[section]
\newtheorem{prop}[thm]{Proposition}
\newtheorem{cor}[thm]{Corollary}
\newtheorem{lem}[thm]{Lemma}
\title{\vspace*{2cm}Graphs of Maximal Energy with Fixed Maximal Degree}
\date{}
\author {Octavio Arizmendi \\ Centro de Investigaci\'on en Matem\'aticas \\ Guanajauto, Mexico. Email: octavius@cimat.mx \\ 
\\ Jorge Fernandez Hidalgo \\ Universidad Nacional Aut\'onoma de Mexico \\ Mexico City, Mexico. Email: jorgefernandez@ciencias.unam.mx}
\begin{document}
\maketitle
\begin{center}(accepted in MATCH) \end{center}
\abstract{We give a bound for the graph energy with given maximal degree in terms of the second and fourth moments of a graph. In the case in which the graph is $d$-regular we obtain the bound that is given in Van Dam, E. et al. (2014). through elementary methods.
}
 \baselineskip=0.30in

\section{ Introduction and Statement of Results}

In this paper we study the energy of a graph as defined by Gutman \cite{Gut}. For a graph $G$ on $n$ vertexes with adjacency eigenvalues $\lambda_1,\dots,\lambda_n$, the energy of the graph $G$ is given by sum of the absolute value of its adjacency eigenvalues,
$$\sum_{i=1}^n |\lambda_i|.$$

Several results on bounds for the energy of a graph have been considered in the theory. e.g. \cite{KM,McClelland, Zhou}. A tool to give general bounds is given by the so-called spectral moments of a graph \cite{GGQ,EB, NiK2,Rada2,RaTi,ZhoGutPe}. The $k$-th spectral moment of a graph is given by
$$M_k(G)= \sum\limits_{i=1}^n \lambda_i^ k.$$

The usefulness of these quantities comes from the fact that, when $k$ is an integer, $M_k$ has a combinatorial interpretation: $M_k(G)$ is the trace of the $k$'th power of the adjacency matrix of $G$, and consequently, is given by the number of closed walks of length $i$ in $G$.

Here we present a simple  but effective method to bound the energy of a graph in terms of its spectral moments which we describe in Lemma \ref{polibound}.

A direct application of  this method provides upper and lower bounds for the energy of a graph with $n$ vertexes, and maximum degree $\Delta$, in terms of the second and fourth moments.

To state this result, we shall use the notation
 \begin{align} \label{moments}
A = \frac{M_4}{\Delta^3}, \quad B = \frac{M_2}{\Delta}, \quad C = \Delta n.
\end{align}

Our main theorem is as follows.
\begin {thm} \label{T1}
Let $G$ be a connected graph with at least $2$ vertexes,
\begin{equation} \label{main inequality}
E(G) \leq -\frac{B^2 + B\sqrt{-A+B}\sqrt{-B+C} - C( A + \sqrt{-A+B}\sqrt{-B+C})}{A-2B+C}
\end{equation}
with equality if and only if $G$ is a complete graph $K_n$, a strongly regular graph with $\lambda=\mu$ or the incidence graph of a symmetric $2-(v,k,\lambda)$ design.
\end{thm}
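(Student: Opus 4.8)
The plan is to apply the polynomial method of Lemma~\ref{polibound}: I will exhibit an even quartic $q(x)=c_0+c_2x^2+c_4x^4$ with $q(x)\ge |x|$ for all $x\in[-\Delta,\Delta]$, so that, since every eigenvalue lies in $[-\Delta,\Delta]$,
\[
E(G)=\sum_{i=1}^n|\lambda_i|\le \sum_{i=1}^n q(\lambda_i)=c_0 n+c_2 M_2+c_4 M_4 .
\]
The art is to choose the cheapest such $q$, i.e.\ one touching $|x|$ exactly at the absolute eigenvalues of the extremal examples. Since those have only two distinct absolute eigenvalues, the larger being $\Delta$, I would require $q$ to meet $|x|$ at $x=\pm\Delta$ and to be tangent to it at $x=\pm s$, where $s=\Delta\sqrt{(B-A)/(C-B)}$.

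First I would check that this $s$ is well defined and lies in $[0,\Delta]$. In terms of the spectrum, $B-A=\Delta^{-3}\sum\lambda_i^2(\Delta^2-\lambda_i^2)\ge 0$ and $C-B=\Delta^{-1}\sum(\Delta^2-\lambda_i^2)\ge 0$ because $\lambda_i^2\le\Delta^2$, while $s\le\Delta$ is equivalent to $A-2B+C\ge 0$, which holds since $\Delta^3(A-2B+C)=\sum(\lambda_i^2-\Delta^2)^2\ge 0$. The three contact conditions $q(\Delta)=\Delta$, $q(s)=s$, $q'(s)=1$ form a linear system in $(c_0,c_2,c_4)$; solving it yields, in particular, $c_4=-\tfrac{1}{2s(\Delta+s)^2}<0$.

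To verify $q(x)\ge|x|$ on $[-\Delta,\Delta]$ it suffices, by evenness, to show $\phi(t):=q(t)-t\ge 0$ on $[0,\Delta]$. By construction $\phi$ is a quartic with a double root at $s$ and a root at $\Delta$; as it has no cubic term, Vieta forces the fourth root to be $-(2s+\Delta)$, so $\phi(t)=c_4(t-s)^2(t-\Delta)(t+2s+\Delta)$. On $[0,\Delta]$ the factors $(t-s)^2$ and $(t+2s+\Delta)$ are nonnegative, $(t-\Delta)\le 0$, and $c_4<0$, whence $\phi\ge 0$. Plugging the solved coefficients into $c_0 n+c_2 M_2+c_4 M_4$ and simplifying (via $s^2/\Delta^2=(B-A)/(C-B)$) then collapses the bound to the closed form of \eqref{main inequality}; this is the most tedious step, but routine. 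As a check, the same value is obtained directly as the ``energy'' of the two-level configuration in which $|\lambda_i|$ takes only the values $s$ and $\Delta$ with multiplicities fixed by $M_2$ and $M_4$.

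Equality holds iff $q(\lambda_i)=|\lambda_i|$ for every $i$, i.e.\ iff every $|\lambda_i|$ is a root of $\phi$ in $[0,\Delta]$, namely $|\lambda_i|\in\{s,\Delta\}$. Hence $G$ has at most two distinct absolute eigenvalues with the largest equal to $\Delta$; since $G$ is connected, Perron--Frobenius gives $\lambda_1=\Delta$ and forces $G$ to be $\Delta$-regular. The remaining and, I expect, genuinely hard step is the structural classification of the connected graphs with this spectral shape: a non-bipartite one has three distinct eigenvalues $\{\Delta,\pm s\}$ and is therefore strongly regular with non-principal eigenvalues $\pm s$, i.e.\ with $\lambda=\mu$ (the degenerate two-eigenvalue case $\{\Delta,-s\}$ being $K_n$), while a bipartite one has spectrum $\{\pm\Delta,\pm s\}$ and is the incidence graph of a symmetric $2$-$(v,k,\lambda)$ design; here I would invoke the known characterizations of graphs with few distinct eigenvalues. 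Finally I would dispose separately of the degenerate parameter values excluded by the formula, namely $A=B$ (so $s=0$) and $A-2B+C=0$ (e.g.\ $K_2$), where the right-hand side of \eqref{main inequality} is to be read as a limit.
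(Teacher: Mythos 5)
Your proposal is correct and follows essentially the same route as the paper: an even quartic touching $|x|$ at $\pm\Delta$ and tangent to it at $\pm s$, fed into Lemma~\ref{polibound}, with the equality case reduced to the known classification of connected graphs with few distinct eigenvalues. The differences are in execution, and mostly to your advantage. You justify the needed inequalities spectrally, via $\Delta^3(B-A)=\sum_i\lambda_i^2(\Delta^2-\lambda_i^2)\ge 0$, $\Delta(C-B)=\sum_i(\Delta^2-\lambda_i^2)\ge 0$ and $\Delta^3(A-2B+C)=\sum_i(\lambda_i^2-\Delta^2)^2\ge 0$; the paper proves the first two by counting closed walks and only \emph{claims} the third (equivalently $0\le\frac{B-A}{C-B}\le 1$), which it needs so that the optimal $r=\sqrt{(B-A)/(C-B)}$ lies in $(0,1)$. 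Your verification of $q(x)\ge |x|$ via the Vieta factorization $\phi(t)=c_4(t-s)^2(t-\Delta)(t+2s+\Delta)$ is cleaner than the paper's convexity/concavity analysis of $Q''$, and going straight to the optimal tangency point (rather than minimizing over the family $P_r$, as the paper does) is enough for the stated inequality.

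One gloss you should repair: from spectrum $\subseteq\{\pm s,\pm\Delta\}$ you assert that the largest absolute eigenvalue equals $\Delta$ and then invoke Perron--Frobenius to get regularity. That assertion does not follow by itself, since a priori all eigenvalues could equal $\pm s$. The paper closes this with a short argument you need: if no eigenvalue has absolute value $\Delta$, then $G$ (connected, $n\ge 2$) has at most two distinct eigenvalues, hence is complete; but complete graphs do have $\Delta$ in their spectrum, a contradiction. Only then does Perron--Frobenius upgrade ``$\Delta$ or $-\Delta$ is an eigenvalue'' to ``$\Delta$ is an eigenvalue, hence $G$ is $\Delta$-regular.'' Finally, your explicit flagging of the degenerate parameter values $A=B$ and $A-2B+C=0$ (e.g.\ $K_2$, where the right-hand side of \eqref{main inequality} reads $0/0$) addresses a point the paper passes over in silence, so treating those as limits, as you propose, is a genuine improvement in rigor.
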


For regular graphs one can see that \eqref{main inequality} is decreasing in $A$ (see Lemma \ref{decrece}), and thus the above theorem subsumes the main theorem of  Van Dam et al. \cite{Dam}.
\begin{thm}[\cite{Dam}] \label{T2}
For a regular graph $G$ one has
\begin{equation*}
E(G) \leq  n \frac{d+(d^2 -d)\sqrt{d-1}}{d^2 -d +1}
\end{equation*}
with equality if and only if $G$ is the incidence graph of a symmetric $2-(v,d,1)$ design.
\end{thm}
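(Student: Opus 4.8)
The plan is to deduce Theorem~\ref{T2} as a special case of Theorem~\ref{T1}. Throughout, let $G$ be a connected $d$-regular graph on $n \geq 2$ vertices. First I would record the two moments that regularity pins down: since the number of closed walks of length $2$ is $M_2 = nd$, the quantities in \eqref{moments} become $B = M_2/\Delta = n$ and $C = \Delta n = dn$, which depend only on $n$ and $d$. Thus, among all $d$-regular graphs on $n$ vertices, the right-hand side of \eqref{main inequality} varies only through $A = M_4/d^3$, and by Lemma~\ref{decrece} it is a decreasing function of $A$. Consequently it suffices to bound $A$ from below by a quantity depending only on $n$ and $d$ and substitute it.

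The combinatorial heart is a clean lower bound on $M_4$. Writing $w_{ij}$ for the $(i,j)$ entry of the square of the adjacency matrix, so that $w_{ij}$ counts the walks of length $2$ from $i$ to $j$, symmetry of the matrix gives $M_4 = \sum_{i,j} w_{ij}^2$. Regularity yields $w_{ii} = d$ and $\sum_{j} w_{ij} = d^2$, hence $\sum_{j \neq i} w_{ij} = d(d-1)$ for every $i$. Since each off-diagonal $w_{ij}$ is a nonnegative integer, the elementary inequality $x^2 \geq x$ gives $\sum_{j \neq i} w_{ij}^2 \geq \sum_{j \neq i} w_{ij} = d(d-1)$, and therefore
\begin{equation*}
M_4 = \sum_i \Big( d^2 + \sum_{j \neq i} w_{ij}^2 \Big) \geq n\big(d^2 + d(d-1)\big) = nd(2d-1),
\end{equation*}
with equality exactly when every pair of distinct vertices has at most one common neighbor. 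Dividing by $d^3$ gives $A \geq A_0 := n(2d-1)/d^2$.

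Combining these, Theorem~\ref{T1} and Lemma~\ref{decrece} give $E(G) \leq f(A) \leq f(A_0)$, where $f$ denotes the right-hand side of \eqref{main inequality}. It then remains to simplify $f(A_0)$. Substituting $A_0 = n(2d-1)/d^2$, $B = n$, $C = dn$, one computes the building blocks
\begin{equation*}
-A_0 + B = \frac{n(d-1)^2}{d^2}, \qquad -B + C = n(d-1), \qquad A_0 - 2B + C = \frac{n(d-1)(d^2 - d + 1)}{d^2},
\end{equation*}
so that $\sqrt{-A_0+B}\,\sqrt{-B+C} = n(d-1)^{3/2}/d$. Feeding these into \eqref{main inequality}, the numerator factors as $-\tfrac{n^2(d-1)}{d}\big(1 + (d-1)^{3/2}\big)$, and after cancellation the bound collapses to
\begin{equation*}
f(A_0) = \frac{nd\big(1 + (d-1)^{3/2}\big)}{d^2 - d + 1} = n\,\frac{d + (d^2-d)\sqrt{d-1}}{d^2 - d + 1},
\end{equation*}
which is precisely the asserted inequality. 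This step is a routine manipulation, but it is worth flagging that the apparent coincidence, namely that the unwieldy expression factors so cleanly, is exactly what makes the reduction go through.

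Finally, for the equality statement both inequalities above must be equalities. Equality in $E(G) = f(A)$ is governed by the equality clause of Theorem~\ref{T1}, so $G$ is a complete graph, a strongly regular graph with $\lambda = \mu$, or the incidence graph of a symmetric $2$-$(v,d,\lambda)$ design; equality in $f(A) = f(A_0)$ forces $A = A_0$, i.e.\ any two distinct vertices share at most one common neighbor. Intersecting these two conditions is where the genuine care is needed, and I expect it to be the main obstacle. One checks directly that the design incidence graphs have all pairwise common-neighbor counts in $\{0,1\}$ and realize $\lambda = 1$, while the common-neighbor condition eliminates the other families up to degenerate small cases: $K_n$ survives only for $n \leq 3$, and a strongly regular graph with $\lambda = \mu \leq 1$ is forced to have parameters $(d^2-d+1,\,d,\,1,\,1)$, whose integrality constraints on the multiplicities of $\pm\sqrt{d-1}$ rule out everything but $K_3$. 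Thus, apart from such small exceptions, equality holds exactly for the incidence graph of a symmetric $2$-$(v,d,1)$ design.
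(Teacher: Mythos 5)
Your proposal is correct and, for the inequality itself, follows essentially the same route as the paper: specialize Theorem \ref{T1} to $d$-regular graphs (where $B=n$, $C=dn$), invoke the monotonicity of Lemma \ref{decrece} to replace $A$ by its minimal possible value, and simplify. Your walk-counting lower bound $M_4\geq nd(2d-1)$ is a combinatorial rephrasing of what the paper does via $M_4=2Z-2m+8Q$ with $Q\geq 0$: your equality condition (no two vertices with two or more common neighbours) is exactly $Q=0$, and your optimal point $r=\sqrt{d-1}/d$ and final algebra agree with the paper's. Where you genuinely go beyond the paper is the equality case, on which the paper's own proof is silent (it asserts the characterization without argument). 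Your method --- intersecting the equality cases of Theorem \ref{T1} with the condition $A=A_0$ --- is the right one, and it correctly uncovers that the characterization as stated is not quite what comes out: the triangle $K_3$ attains the bound for $d=2$ (its energy is $4=3\cdot\frac{2+2\sqrt{1}}{3}$), has no quadrangles, and is not bipartite, hence is not the incidence graph of any design. This is a defect of the statement, not of your proof; the original theorem of Van Dam, Haemers and Koolen explicitly excepts, for degree $2$, unions of triangles and hexagons (the hexagon \emph{is} the incidence graph of the $2$-$(3,2,1)$ design, so the triangle is the real exception, and your $K_2$ case is the degenerate $2$-$(1,1,1)$ design). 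Two small polish points: deducing $A=A_0$ from $f(A)=f(A_0)$ requires \emph{strict} monotonicity, which holds because $f$ is a minimum of linear functions of $A$ with strictly negative slope $-1/(2r(r+1)^2)$, and your elimination of strongly regular graphs with $\lambda=\mu=1$ is cleaner via the friendship theorem (no regular graph other than $K_3$ has every pair of vertices sharing exactly one common neighbour) than via multiplicity integrality.
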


We must mention that the  above result was derived originally by very different methods from the ones in this paper and we do not know if the methods from \cite{Dam}, may be applied to show our main theorem.

Apart from the introduction, the paper has two more sections. Section 2 gives the basic preliminaries needed for this paper and the proofs of the main results are given in Section 3.

\section{Preliminaries on graphs}

\subsection{Basic definitions}

Throughout this paper, we shall work exclusively with simple finite graphs. We shall denote the number of vertexes of the graph $G$ by $n$ and the number of edges by $m$. We will denote the vertexes of a graph $G$ using $v_1,\dots,v_n$. We say vertex $v_i$ is a neighbour of $v_j$ if $v_i$ and $v_j$ are adjacent (in other words if $(v_i,v_j)$ is an edge of $G$). We define the degree of a vertex $v_i$ as its number of neighbours and we denote this quantity by $d_i$. The maximum degree over all vertexes of the graph will be denoted by $\Delta$, while the minimum degree will be denoted by $\delta$. If $\Delta=\delta=d$ we say that $G$ is a $d$-regular graph.

Given a graph $G$, the adjacency matrix of $G$ is the $n\times n$ symmetric matrix $A$ such that $A_{ij}$ is $1$ if $i$ and $j$ are adjacent and $A_{i,j}$ is $0$ otherwise. Being a symmetric matrix, $A$ has $n$ real eigenvalues, counted with multiplicity, which are called the adjacency eigenvalues of the graph $G$, we shall sometimes simply refer to them as the eigenvalues of $G$.

\subsection{Spectral moments}

The spectral moments of the graph $G$ are the quantities $M_k=Tr(A^k)$.  Since $A$ is normal then $Tr(A^k)=\sum\limits_{i=1}^n \lambda_i^ k$.
Let us note that the $0$-th, first and second moments are given by 
\begin{equation*}\label{m0} M_0=n,~ M_1 = 0, \text{ and } M_2=0,\end{equation*}
 let us denote the number of $4$-cycles in $G$, by $Q$ and use $Z$ for the Zagreb index given by  $Z=\sum\limits_{i=1}^ n d_i^ 2$, then
\begin{equation*}\label{m4}
M_4 = 2Z-2m +8Q.
\end{equation*}
Hence, with the notation of the introdution, we have, 
 \begin{align} \label{AQ}
A = \frac{2Z-2m+8Q}{\Delta^3}, \quad B = \frac{2m}{\Delta}, \quad C = \Delta n.
\end{align}

\subsection{Graphs with few eigenvalues}

Finally,  we describe the graphs which satisfy the equality in Theorems \ref{T1} and \ref{T2}.

We say that a graph is strongly connected with parameters $\lambda,\mu$ if it is regular and it satisfies: the following properties
\begin{itemize}
\item Every two adjacent vertexes have $\lambda$ common neighbours.
\item Every two non adjacent vertexes have $\mu$ common neighbours.
\end{itemize}

The spectrum of a strongly connected graph with $n$ vertexes and common degree $d$ is:

\begin{itemize}
\item $d$ with multiplicity $1$.
\item $\frac{1}{2}[(\lambda-\mu)+\sqrt{(\lambda-\mu)^2+4(d-\mu)}]$ with multiplicity $\frac{1}{2}[(v-1)-\frac{2k+(v-1)(\lambda-\mu)}{\sqrt{(\lambda-\mu)^2+4(d-\mu)}}],$
\item $\frac{1}{2}[(\lambda-\mu)-\sqrt{(\lambda-\mu)^2+4(d-\mu)}]$ with multiplicity $\frac{1}{2}[(v-1)+\frac{2k+(v-1)(\lambda-\mu)}{\sqrt{(\lambda-\mu)^2+4(d-\mu)}}].$
\end{itemize}

We refer to a symmetric $2-(v,k,\lambda)$ design as a set $X$ with $v$ elements, along with a family $F$ consisting of $v$ subsets of $X$ (called blocks) such that each block has $k$ elements and for any two points $x$ and $y$ in $X$ there are exactly $\lambda$ blocks that contain both.

We define the incidence graph of a symmetric $2-(v,k,\lambda)$ design as the graph with vertexes $F\cup X$ such that there is an edge between a block $B\in F$ and a point $x\in X$ if and only if $x\in B$.

The spectrum of the incidence graph of a symmetric $2-(v,k,\lambda)$ design is:
\begin{itemize}
\item $\pm k$, each with multiplicity $1$.
\item $\pm \sqrt{k-\lambda}$, each with multiplicity $v-1$.
\end{itemize}

\section{Bounds for the energy of a graph using moments}

\subsection{A general bound for the energy of a graph}

The main idea comes from the following observation. Let $P(x)=a_mx^m + \dots + a_0$ be a polynomial such that  $P(x)\geq |x|$ for all $x\in [-\rho,\rho]$, then $$E(G)= \sum\limits_{i=1}^n |\lambda_i| \leq \sum\limits_{i=1}^n P(\lambda_i) = \sum\limits_{i=0}^m a_m M_m.$$ Here we emphasize that in order to find useful bounds for $E(G)$ we only need to take into consideration $x\in [-\rho,\rho]$ and not in all $\mathbb{R}$. Since obtaining $\rho$ directly from combinatorial properties of the graph is, in general hard we use instead $\Delta$, obtaining the following lemma, which includes analogous lower bounds for $E(G)$.
\begin{lem}\label{polibound} Let G be a graph with maximum degree $\Delta$.

1)  For any polynomial $P(x)=a_mx^m + \dots + a_0$ a polynomial such that  $P(x)\geq |x|$ on $x\in [-\Delta,\Delta]$ we have
 \begin{equation}\label{equa1}E(G)\leq \sum\limits_{i=0}^m a_m M_m.\end{equation}
 
2) For any polynomial $Q(x)=a_mx^m + \dots + a_0$ be a polynomial $Q(x)\leq |x|$, on $x\in [-\Delta,\Delta]$  we have  \begin{equation}\label{equa2} E(G)\geq \sum\limits_{i=0}^m a_m M_m.\end{equation}
 Moreover, equality in \eqref{equa1} (resp. \eqref{equa2}) occurs if and only if $P(\lambda_i)=|\lambda_i|$ (resp.  $Q(\lambda_i)=|\lambda_i|$) for all $i$.
\end{lem}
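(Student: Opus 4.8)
The plan is to reduce both inequalities to a pointwise comparison evaluated at the eigenvalues, for which the only substantive ingredient is that every adjacency eigenvalue of $G$ lies in the interval $[-\Delta,\Delta]$. I would first establish this containment. Given an eigenvalue $\lambda$ with eigenvector $x$, choose an index $j$ with $|x_j|$ maximal and read off the $j$-th coordinate of $Ax=\lambda x$; this gives
\begin{equation*}
|\lambda|\,|x_j| = \Bigl|\sum_{k\sim j} x_k\Bigr| \le \sum_{k\sim j}|x_k| \le d_j\,|x_j| \le \Delta\,|x_j|,
\end{equation*}
so dividing by $|x_j|>0$ yields $|\lambda|\le\Delta$. (Equivalently, this is Gershgorin's theorem applied to the symmetric matrix $A$, whose diagonal is zero and whose $i$-th absolute row sum is $d_i\le\Delta$.)

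For part 1, since each $\lambda_i\in[-\Delta,\Delta]$ and $P(x)\ge|x|$ holds on that interval by hypothesis, I evaluate the hypothesis at each eigenvalue to obtain $P(\lambda_i)\ge|\lambda_i|$. Summing over $i$ and using linearity together with the definition $M_k=\sum_i\lambda_i^k$ gives
\begin{equation*}
E(G)=\sum_{i=1}^n|\lambda_i|\le\sum_{i=1}^n P(\lambda_i)=\sum_{i=1}^n\sum_{k=0}^m a_k\lambda_i^k=\sum_{k=0}^m a_k M_k,
\end{equation*}
which is \eqref{equa1}. Part 2 is word-for-word identical after reversing the inequality, using $Q(\lambda_i)\le|\lambda_i|$ in place of $P(\lambda_i)\ge|\lambda_i|$, giving \eqref{equa2}.

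For the equality clause, I would note that the single inequality used is the termwise bound $\sum_i P(\lambda_i)\ge\sum_i|\lambda_i|$, a sum of the nonnegative quantities $P(\lambda_i)-|\lambda_i|\ge 0$. Such a sum vanishes exactly when each summand does, i.e. when $P(\lambda_i)=|\lambda_i|$ for every $i$; the same argument applied to $|\lambda_i|-Q(\lambda_i)\ge 0$ handles equality in \eqref{equa2}.

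The argument is essentially routine, so there is no genuine obstacle: the spectral-radius bound $|\lambda_i|\le\Delta$ is classical, and everything else is linearity of the trace. The only points that warrant care are purely bookkeeping: the right-hand sides should be read as $\sum_{k=0}^m a_k M_k$ (so that the constant term contributes $a_0 M_0 = a_0 n$ via $M_0=n$), and one must confirm that the hypothesis $P\ge|x|$ is needed only on $[-\Delta,\Delta]$ rather than on all of $\mathbb{R}$, which is precisely what the spectral containment secures and what makes the bound useful.
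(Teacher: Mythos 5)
Your proof is correct and takes essentially the same route as the paper, which justifies the lemma by the pointwise comparison $P(\lambda_i)\ge|\lambda_i|$ summed over the spectrum; you merely make explicit two things the paper leaves implicit, namely the classical spectral containment $|\lambda_i|\le\Delta$ and the observation that a vanishing sum of nonnegative terms forces each term to vanish (the equality clause). Your bookkeeping remark is also right: the right-hand sides should read $\sum_{k=0}^m a_k M_k$, the paper's $\sum_{i=0}^m a_m M_m$ being a typo.
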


The strength of the previous lemma comes  from  the freedom of choosing the above polynomials, and using the information of the class of graphs studied is very helpful as we will see in the next sections. 

\subsection{Upper bounds in terms of second and fourth moments}

Let us consider the case where $P$ is a polynomial of degree $4$. Since the absolute value is an even function considering $x$ or $x^3$ in the polynomial will produce one of the sides to tilt which will worsen our approximation. Thus we assume that $P$ is even. Then we are led consider a polynomial of the form $P(x)=ax^4+bx^2+c.$

For this polynomial $P(x)= ax^4 + bx^2 + c$ we have
 \begin{equation}\label{momentspoli}
Tr(P(A))=\sum\limits_{i=1}^nP(\lambda_i) =aM_4 + bM_2 + cM_0= 2aZ+8aQ + 2(b-a)m + cn.
\end{equation}

Moreover, we see that \begin{eqnarray}\label{objective} Tr(P(A)) &=& aM_4 + bM_2 + cM_0 = a( 2(\sum\limits_{i=0}^n d_i^2) -2m +8Q)  + bm + cn  \\&=& a8Q + (b-2)m + (c+ad^2)n = a8Q + \left(c + ad^2 + \frac{bd}{2} -d\right)n \nonumber.\end{eqnarray}

We note for further reference that when $G$ is $d$-regular we have $2m=dn$ and $Z=nd^2$ which gives us
\begin{equation}\label{momentspolireg}
\sum\limits_{i=1}^nP(\lambda_i) = 2and^2 +8aQ + (b-a)nd + cn.
\end{equation}

Now our goal is to choose $a,b$ and $c$ which minimize the left-hand side of (\ref{objective}) restricted to the condition that $P(x)\geq x$ for $\Delta \geq x\geq0$. For this we seek for polynomials that are tangent to $f(x)=x$.

\begin{lem}
Given $0<r<1$ there is a unique polynomial $P_{r}=ax^4+bx^2+c$ such that $P_r(r)=r,P_r(1)=1$  and $P_r(x)\geq x$ for $x\in [0,1]$.
\end{lem}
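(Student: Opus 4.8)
The plan is to pass to the difference $g(x) = P_r(x) - x$ and translate the three requirements into conditions on the coefficients. Writing $g(x) = ax^4 + bx^2 - x + c$, the conditions $P_r(r) = r$ and $P_r(1) = 1$ become $g(r) = 0$ and $g(1) = 0$, while $P_r(x) \ge x$ on $[0,1]$ becomes $g(x) \ge 0$ on $[0,1]$. The decisive observation is that, since $0 < r < 1$ is an interior point at which the nonnegative function $g$ attains the value $0$, the point $r$ is an interior minimum, and hence $g'(r) = 0$. Thus every admissible polynomial must satisfy the three linear conditions $g(r) = 0$, $g'(r) = 0$, $g(1) = 0$.

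For uniqueness I would regard these three equations as a linear system in $(a,b,c)$, with coefficient matrix having rows $(r^4, r^2, 1)$, $(4r^3, 2r, 0)$ and $(1,1,1)$. A direct cofactor expansion gives determinant $-2r(r^2-1)^2$, which is nonzero for $0 < r < 1$. Hence the system has at most one solution, so there is at most one admissible $P_r$.

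For existence I would exhibit the solution through its root structure. A quartic $g$ with $g(r) = g'(r) = 0$ has a double root at $r$, and together with the simple root at $1$ this accounts for three of its four roots; the absence of an $x^3$ term, forced by $P_r$ being even, pins the fourth root at $s = -(1+2r)$, giving $g(x) = a(x-r)^2(x-1)(x+1+2r)$. Matching the coefficient of $x$ to $-1$ then determines $a = -1/(2r(1+r)^2) < 0$. It remains to verify the inequality: on $[0,1]$ one has $(x-r)^2 \ge 0$, $x-1 \le 0$ and $x+1+2r > 0$, so the product $(x-r)^2(x-1)(x+1+2r)$ is $\le 0$, and multiplying by the negative leading coefficient $a$ yields $g(x) \ge 0$. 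Setting $P_r = g + x$ produces an even quartic with the required properties.

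The step I expect to be the crux is recognizing and justifying the tangency condition $g'(r) = 0$: once the problem is recast as ``$g$ has a double root at $r$ and a simple root at $1$'', both the uniqueness (via the nonvanishing determinant) and the existence (via the explicit factorization and the sign check) follow by routine computation. A minor point to keep honest is confirming that the determinant really equals $-2r(r^2-1)^2$ and that the resulting leading coefficient $a$ is genuinely negative, since the sign of $a$ is exactly what makes the inequality go the right way on $[0,1]$.
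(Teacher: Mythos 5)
Your proposal is correct, and while it starts from the same observation as the paper --- that an admissible $P_r$ must be tangent to $x$ at the interior point $r$, so that $g(r)=g'(r)=g(1)=0$ --- it diverges from the paper in how both halves are finished. The paper solves the $3\times 3$ system explicitly, obtaining $a=-\tfrac{1}{2r(r+1)^2}$, $b=\tfrac{3r^2+2r+1}{2r(r+1)^2}$, $c=\tfrac{r^2(2r+1)}{2r(r+1)^2}$, and then proves $P_r(x)\geq x$ by a calculus argument: the difference $Q=P_r-x$ has decreasing second derivative, hence is convex on $(0,r_0)$ and concave on $(r_0,1)$ for the inflection point $r_0=\sqrt{\tfrac{3r^2+2r+1}{6}}$, and a minimum analysis on each subinterval gives the inequality. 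You instead prove uniqueness abstractly, via the nonvanishing determinant $-2r(r^2-1)^2$ of the linear system (your cofactor computation checks out: $4r^3-2r-2r^5=-2r(r^2-1)^2$), and existence via the factorization $g(x)=a(x-r)^2(x-1)(x+1+2r)$, where the fourth root is pinned down by the absence of the $x^3$ term and matching the coefficient of $x$ forces $a=-\tfrac{1}{2r(1+r)^2}<0$; nonnegativity on $[0,1]$ is then a pure sign check on the factors, with no calculus beyond the initial tangency observation. Your route is arguably cleaner: the factorization makes it transparent that $g>0$ off $\{r,1\}$, which is exactly the strictness the paper needs later when characterizing the equality cases of its main theorem. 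What the paper's route buys in exchange is the explicit values of $b$ and $c$, which are not incidental: they appear verbatim in the bound of the subsequent theorem, so if one adopted your proof wholesale one would still need to expand the factorization (or solve the system) to extract them.
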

\begin{proof}

We shall call the desired polynomial $P_r=ax^4+bx^2+c$.
Since $P_r$ must be tangent to the absolute value function at $r$ the following equations must be satisfied
\begin{eqnarray*} 
P_r(r)&=&ar^4+br^2+c = r, \\
P_r(1)&=&a+b+c = 1, \\
P_r'(r)&=& 4ar^3 + 2br = 1.
\end{eqnarray*} 
The solution to this system of equations is unique and gives us
\begin{equation}a = -\frac{1}{2r(r+1)^2}, \quad b = \frac{3r^ 2 +2r + 1}{2r(r+1)^ 2}, \quad c= \frac{r^ 2(2r + 1)}{2r(r+1)^ 2}. \end{equation}

First we show that $P_r(x)> x$ for all $0 < x < 1$ with $x\neq r$.  Consider for this, the function $Q(x)=P(x)-x$. Its second derivative is given by  $12ax^2 - 2b$, which is decreasing and thus $Q(x)$ is convex on $(0,r_0)$ and concave on $(r_0,1)$, where $r_0=\sqrt{\frac{3r^2 +2r + 1}{6}}$ is the unique solution to $Q''(x)=0$. Since $r\in(0,1)$,  then $r<r_0<1$. Since in the interval $[0,r_0]$ the function is convex it reaches its minimum at the point such that $Q'(x)=0$. This point is $r$ and we have $Q(r)=0$. In the interval $[r_0,1]$ the function is concave and therefore reaches its minimum at one of $r_0$ and $1$. We have $Q(r_0)> Q(r)=0$ and we have $Q(1)=0$. So indeed $P_r$ is as desired.

%Finally, if $R\neq 1$, let p$P_{r,R}(x) = RP_{r/R}(x/R)$, be the dilation by $R$ of the polynomial $P_{r/R}$.  This polynomial satisfies the desired properties and is given by explicitly in terms of its coefficients by
%\begin{align*}
%a_{r,R} = -R\frac{1}{2\frac{r}{R}(\frac{r}{R}+1)^2}\\
%b_{r,R} = \frac{1}{R}\frac{3\frac{r}{R}^ 2 +2\frac{r}{R} + 1}{2\frac{r}{R}(\frac{r}{R}+1)^ 2}\\
%c_{r,R} = \frac{1}{R^ 3}\frac{\frac{r}{R}^ 2(2\frac{r}{R} + 1)}{2\frac{r}{R}(\frac{r}{R}+1)^ 2}\\
%\end{align*}
 
\end{proof}

Thus from Lemma \ref{polibound} we obtain the following bound.
	
\begin{thm}\label{acotacion}
Let $r\in(0,1)$. For $G$ a graph with maximum degree $\Delta$ the energy of $G$ is bounded by  
$$E(G)\leq  \frac{-1}{2r(r+1)^2}\frac{2Z-2m+8Q}{\Delta^3}  +(\frac{3r^2+2r+1}{2r(r+1)^2}) \frac{2m}{\Delta}  + n\Delta \frac{r^2(2r+1)}{2r(r+1)^2}$$
\end{thm}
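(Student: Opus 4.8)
The plan is to feed the polynomial $P_r$ produced by the preceding lemma into Lemma \ref{polibound}, after rescaling it so that it dominates the absolute value on the entire spectrum of $G$. Recall that the eigenvalues of $G$ lie in $[-\Delta,\Delta]$, which is precisely why Lemma \ref{polibound} only requires control of the polynomial on that interval; so all we have to do is manufacture, out of $P_r$, a polynomial that majorizes $|x|$ on $[-\Delta,\Delta]$ and whose trace against the spectral moments produces the three terms in the claimed bound.

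First I would record that $P_r(x)=ax^4+bx^2+c$ is an \emph{even} polynomial, so the one-sided inequality $P_r(x)\geq x$ established on $[0,1]$ upgrades at once to $P_r(x)=P_r(|x|)\geq |x|$ on the symmetric interval $[-1,1]$. To move from $[-1,1]$ to the interval $[-\Delta,\Delta]$ demanded by Lemma \ref{polibound}, I would introduce the rescaled polynomial
$$\widetilde{P}(x)=\Delta\, P_r\!\left(\frac{x}{\Delta}\right)=\frac{a}{\Delta^3}x^4+\frac{b}{\Delta}x^2+c\Delta.$$
For $x\in[-\Delta,\Delta]$ we have $x/\Delta\in[-1,1]$, whence $\widetilde{P}(x)=\Delta\,P_r(x/\Delta)\geq \Delta\,|x/\Delta|=|x|$, so $\widetilde{P}$ satisfies the hypothesis of part 1) of Lemma \ref{polibound}.

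Applying that lemma to $\widetilde{P}$ yields
$$E(G)\leq \sum_{i=1}^n \widetilde{P}(\lambda_i)=\frac{a}{\Delta^3}M_4+\frac{b}{\Delta}M_2+c\Delta\,M_0=\frac{a}{\Delta^3}M_4+\frac{b}{\Delta}M_2+c\Delta n.$$
It then remains to substitute the closed forms $M_4=2Z-2m+8Q$ and $M_2=2m$ from Section 2, together with the explicit coefficients $a=-\frac{1}{2r(r+1)^2}$, $b=\frac{3r^2+2r+1}{2r(r+1)^2}$, and $c=\frac{r^2(2r+1)}{2r(r+1)^2}$ computed in the previous lemma. This reproduces the three summands of the asserted bound verbatim and completes the proof.

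I do not expect a genuine obstacle: once the previous lemma is in hand the argument is essentially bookkeeping. The only two points deserving care are the rescaling step — tracking the homogeneity degrees $\Delta^{-3},\Delta^{-1},\Delta^{1}$ of the coefficients correctly when forming $\widetilde{P}$ — and the use of evenness of $P_r$, which is exactly what lets us pass from the one-sided estimate $P_r(x)\geq x$ on $[0,1]$ to the two-sided estimate $\widetilde{P}(x)\geq|x|$ on the full interval $[-\Delta,\Delta]$ needed to cover the whole spectrum.
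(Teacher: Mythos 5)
Your proposal is correct and follows essentially the same route as the paper: take the tangent polynomial $P_r$ from the preceding lemma, use its evenness to pass from $P_r(x)\geq x$ on $[0,1]$ to $P_r(x)\geq |x|$ on $[-1,1]$, dilate via $P_{r,\Delta}(x)=\Delta P_r(x/\Delta)$ to dominate $|x|$ on $[-\Delta,\Delta]$, and apply part 1) of Lemma \ref{polibound} together with $M_0=n$, $M_2=2m$, $M_4=2Z-2m+8Q$. Your write-up is in fact more careful than the paper's (which leaves the coefficient bookkeeping and the evenness step implicit, and contains a typo $x/R$ for $x/\Delta$), so nothing further is needed.
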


\begin{proof}

Let $P(x)$ be as in Lemma 2.2. Then $P_r(x)\geq |x|$ for $x\in[-1,1]$. Now let  $P_{r,\Delta}(x) = \Delta P_{r}(x/R)$, be the dilation by $\Delta$ of the polynomial $P_{r}$.  Then $P_{r,\Delta}(x)\geq |x|$ for $x\in[-\Delta,\Delta]$. 
By part 1) of Lemma 2.1 we conclude.
\end{proof}

For the following we recall the notation
\begin{align*}
A = \frac{2Z-2m+8Q}{\Delta^3}, \quad B = \frac{2m}{\Delta}, \quad C = \Delta n
\end{align*}

We want to minimize for $r$. For this, we claim that $A \leq B\leq C$ and that $0\leq \frac{B-A}{C-B}\leq1$. To show that $B\leq C$ we simply notice $2m\leq \Delta n \Delta^2 n$, and to prove $ A \leq B$ we see it is equivalent to $M_4\leq 2m \Delta^2$. Now,  notice that for every oriented edge $u,v$ there can be at most $\Delta^2$ closed walks of length $4$ starting with $u,v$.

By elementary calculus one sees that subject to the restriction $A \leq B$ and $B\leq C$. The minimum value of the function $\frac{-1}{2r(r+1)^2}A  +(\frac{3r^2+2r+1}{2r(r+1)^2})B  + \frac{r^2(2r+1)}{2r(r+1)^2}C$ for $r\in (0,1)$ is reached at  $\frac{\sqrt{B-A}}{\sqrt{C-B}}$, giving the value
\begin{equation}\label{minimo}
-\frac{B^2 + B\sqrt{B-A}\sqrt{C-B} - C( A + \sqrt{B-A}\sqrt{C-B})}{A-2B+C}.
\end{equation}
 
Thus we arrive to the following theorem.
\begin {thm}
For any graph $G$ one has
\begin{equation*}
E(G) \leq -\frac{B^2 + B\sqrt{-A+B}\sqrt{-B+C} - C( A + \sqrt{-A+B}\sqrt{-B+C})}{A-2B+C}
\end{equation*}
with equality if and only if the spectrum of the graph is contained in $\{\pm \frac{\sqrt{B-A}}{\sqrt{C-B}}, \pm \Delta\}$
\end{thm}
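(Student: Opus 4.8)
The plan is to read Theorem~\ref{acotacion} as a one–parameter family of upper bounds, one for each $r\in(0,1)$, and then optimize over $r$. Writing the right-hand side of that theorem over the common denominator $D(r)=2r(r+1)^2$ as
$$f(r)=\frac{N(r)}{D(r)},\qquad N(r)=-A+(3r^2+2r+1)B+(2r^3+r^2)C,$$
we have $E(G)\le f(r)$ for every $r\in(0,1)$, hence $E(G)\le \inf_{r\in(0,1)}f(r)$. It therefore suffices to show that this infimum equals the expression \eqref{minimo} and to determine when the bound is attained.

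First I would minimize $f$. Differentiating the rational function and clearing the positive factor $D(r)^2$, the stationarity condition $f'(r)=0$ becomes the polynomial identity $N'(r)D(r)=N(r)D'(r)$, whose coefficients are linear in $A,B,C$. The claim is that, after cancellation, this collapses to the single clean relation $r^2(C-B)=B-A$, so that the unique admissible critical point is $r^\ast=\sqrt{(B-A)/(C-B)}$. At this point I would invoke the inequalities $A\le B\le C$ and $0\le\frac{B-A}{C-B}\le 1$ established just before the theorem to place $r^\ast$ in $(0,1]$, and I would check that $f(r)\to+\infty$ as $r\to 0^+$ (because $B\ge A$) while $f$ is finite at $r=1$; a sign analysis of $f'$ (negative on $(0,r^\ast)$, positive on $(r^\ast,1)$) then confirms that the interior critical point is a genuine global minimum. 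Substituting $r^\ast$ back into $f$ and simplifying is the longest computation, and it yields exactly \eqref{minimo}.

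For the equality statement I would return to Lemma~\ref{polibound}. Equality in $E(G)\le \mathrm{Tr}(P_{r^\ast,\Delta}(A))=f(r^\ast)$ holds, by the ``moreover'' clause of that lemma, if and only if $P_{r^\ast,\Delta}(\lambda_i)=|\lambda_i|$ for every eigenvalue $\lambda_i$, so it remains to locate the points of $[-\Delta,\Delta]$ where $P_{r^\ast,\Delta}(x)=|x|$. From the proof of Lemma~2.2 the difference $P_{r^\ast}(x)-x$ is a quartic with a double root at $r^\ast$ and a simple root at $1$, its fourth root being $-(2r^\ast+1)\notin[-1,1]$; hence on $[0,1]$ one has $P_{r^\ast}(x)=x$ only at $x=r^\ast$ and $x=1$, and by evenness $P_{r^\ast}(x)=|x|$ on $[-1,1]$ exactly at $x\in\{\pm r^\ast,\pm 1\}$. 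Rescaling by $\Delta$, equality forces every $\lambda_i$ to lie in $\{\pm\Delta,\pm r^\ast\Delta\}$, which is the asserted condition; conversely, if the spectrum is contained in this set then each $P_{r^\ast,\Delta}(\lambda_i)=|\lambda_i|$ and the bound is attained.

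The main obstacle is purely computational: verifying that the stationarity equation really factors through $r^2(C-B)-(B-A)$ and that back-substitution produces precisely \eqref{minimo}. I would organize this by treating the numerator of $f'$ as a polynomial in $r$ that is linear in the parameters, extracting the factor $\bigl(r^2(C-B)-(B-A)\bigr)$, and checking that the complementary factor has no zero in $(0,1)$. As a guard against algebraic slips I would also run the self-consistency check that a spectrum supported on $\{\pm\Delta,\pm r^\ast\Delta\}$ returns $\sqrt{(B-A)/(C-B)}=r^\ast$: computing $A,B,C$ from such a spectrum gives $B-A=(s+t)\Delta\,r^2(1-r^2)$ and $C-B=(s+t)\Delta(1-r^2)$ (where $s,t$ are the multiplicities of $\pm r\Delta$), whose ratio is $r^2$, confirming that the extremal value of the parameter is consistent with the extremal spectra.
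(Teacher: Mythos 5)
Your proposal is correct and takes essentially the same approach as the paper: you read Theorem \ref{acotacion} as a one-parameter family of bounds, minimize over $r\in(0,1)$ (the stationarity numerator indeed factors as $2(3r+1)(r+1)\bigl[(C-B)r^2-(B-A)\bigr]$, giving $r^\ast=\sqrt{(B-A)/(C-B)}$ and, after back-substitution, exactly the value \eqref{minimo}), and obtain the equality condition from the ``moreover'' clause of Lemma \ref{polibound} together with the tangency structure of $P_{r^\ast}$; this is precisely the paper's argument, with the steps it dismisses as ``elementary calculus'' actually carried out. The only discrepancy is in your favor: your equality set $\{\pm r^\ast\Delta,\pm\Delta\}$ has the correct $\Delta$-scaling, whereas the set $\{\pm\sqrt{B-A}/\sqrt{C-B},\pm\Delta\}$ printed in the theorem omits the factor $\Delta$ (as the design-graph examples, whose small eigenvalues are $\pm\sqrt{k-\lambda}=r^\ast\Delta$, confirm).
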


\begin{proof} 
The inequality follows from the considerations above. In order to have equality, the polynomial $P_{\frac{\sqrt{B-A}}{\sqrt{C-B}},\Delta}(x)$ and the function $Abs(x):=|x|$ should coincide for all eigenvalues of $G$. This is only possible for the set $\{\pm \frac{\sqrt{B-A}}{\sqrt{C-B}}, \pm \Delta\}.$
\end{proof}
The following propositon characterizes the graphs for which equality is reached and thus completes the main theorem of this paper.

\begin{prop} The only connected graphs with at least $2$ vertexes for which equality is attained are the complete graphs $K_n$, strongly regular graphs with $\lambda=\mu$ and incidence graphs of symmetric $2-(v,k,\lambda)$ designs.
\end{prop}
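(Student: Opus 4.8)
The plan is to start from the equality condition furnished by the preceding theorem: equality forces the spectrum of $G$ to be contained in $\{\pm s,\pm\Delta\}$, where $s=r\Delta$ with $r=\sqrt{(B-A)/(C-B)}\in(0,1)$. The first thing I would record is that $0$ cannot be an eigenvalue: the extremal polynomial satisfies $P_{r,\Delta}(0)=c\Delta>0=|0|$, since $c=\frac{r^2(2r+1)}{2r(r+1)^2}>0$ for $r\in(0,1)$, so an eigenvalue equal to $0$ would violate the tangency condition $P_{r,\Delta}(\lambda_i)=|\lambda_i|$. Hence $s>0$, and the whole spectrum consists of at most the four nonzero values $\pm s,\pm\Delta$, symmetric about the origin.

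Next I would establish regularity. Since $\lambda_1$ is the spectral radius we have $|\lambda_i|\le\lambda_1$ for all $i$, and by Perron--Frobenius $\lambda_1$ is simple and positive for connected $G$; moreover $\lambda_1\le\Delta$, with equality if and only if $G$ is $\Delta$-regular. As $s<\Delta$, there are two possibilities. If $\Delta$ is not an eigenvalue, then every $|\lambda_i|=s$, so $A^2=s^2 I$; but $(A^2)_{uv}$ counts the common neighbours of $u$ and $v$, and a connected graph on at least two vertices in which no two distinct vertices share a neighbour must be $K_2$. Otherwise $\lambda_1=\Delta$ and $G$ is $\Delta$-regular. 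So, apart from the trivial graph $K_2=K_2$, I may assume $G$ is connected and $d$-regular with $d=\Delta$ and spectrum contained in $\{\pm s,\pm d\}$, $0<s<d$.

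I would then split according to the number of distinct eigenvalues actually present. With two distinct eigenvalues a connected graph is complete, so $G=K_n$; here the second eigenvalue is negative (forced by $\operatorname{tr}A=0$), so it is $-1$ and $s=1$. With three distinct eigenvalues $d,s,-s$, a connected regular graph is strongly regular; since the two non-principal eigenvalues sum to $\lambda-\mu$ and here sum to $0$, this is exactly the strongly regular case with $\lambda=\mu$. With four distinct eigenvalues $\pm d,\pm s$ the spectrum is symmetric about $0$, hence $G$ is bipartite; writing the adjacency matrix in block form $\left(\begin{smallmatrix}0&B\\B^{T}&0\end{smallmatrix}\right)$ with $B$ an $N\times N$ $0$--$1$ matrix (the two parts have equal size $N$ by regularity), the singular values of $B$ are $d$ once and $s$ with multiplicity $N-1$, so $BB^{T}=s^{2}I+\frac{d^{2}-s^{2}}{N}J$. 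The constant off-diagonal entry $\lambda'=\frac{d^{2}-s^{2}}{N}$ shows that any two points (vertices of one part) lie in exactly $\lambda'$ common blocks (neighbourhoods of vertices of the other part), so $G$ is the incidence graph of a symmetric $2$-$(N,d,\lambda')$ design.

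The step I expect to be the crux is this four-eigenvalue case: turning the spectral data into the combinatorial design, i.e.\ justifying the block decomposition, identifying the singular values, and reducing $BB^{T}$ to the form $s^{2}I+\lambda' J$ with constant integral $\lambda'$. The remaining ingredients—that a connected graph with two distinct eigenvalues is complete, and that a connected regular graph with three distinct eigenvalues is strongly regular—are standard and can be cited. Finally, matching against the spectra recorded in Section~2 (complete graphs, strongly regular graphs with $\lambda=\mu$, and incidence graphs of symmetric designs) confirms that these three families do realize the extremal spectral shape, so equality is attained precisely for them.
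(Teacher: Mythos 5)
Your proof is correct, and its skeleton matches the paper's: equality forces the spectrum into $\{\pm s,\pm\Delta\}$, one then shows $G$ is regular, and the classification proceeds by the number of distinct eigenvalues, with two eigenvalues giving $K_n$, three giving a strongly regular graph with $\lambda=\mu$, and four giving a design. The differences lie in how two steps are discharged. For regularity, the paper argues that if $\Delta$ were not an eigenvalue then $G$ would have exactly two distinct eigenvalues, hence be complete, contradicting that complete graphs have $\Delta$ in their spectrum; you instead combine Perron--Frobenius ($\lambda_1\le\Delta$ with equality iff $G$ is regular) with the observation $A^2=s^2I$, which forces $K_2$ --- both work and are of comparable length. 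The more substantial difference is the four-eigenvalue case: the paper simply cites \cite{CveDo} for the fact that a connected regular bipartite graph with exactly four distinct eigenvalues is the incidence graph of a symmetric design, whereas you prove it directly by passing to the biadjacency block $B$, reading off its singular values ($d$ once, $s$ with multiplicity $N-1$), and computing $BB^{T}=s^{2}I+\frac{d^{2}-s^{2}}{N}J$, which exhibits the design parameters $\bigl(N,d,\lambda'\bigr)$ explicitly and shows $\lambda'$ is a positive integer; this makes the crux of the classification self-contained at the cost of some length. Two small points to tidy: in your three-eigenvalue case you should justify that the eigenvalues present are $d,s,-s$ and not, say, $d,-d,s$ (if $-d$ were present, bipartiteness would force a symmetric spectrum and hence two or four distinct eigenvalues --- this is exactly why the paper splits first on whether $-\Delta$ is an eigenvalue); and in the converse direction, both you and the paper leave implicit the squeezing argument that tangency of some $P_{r,\Delta}$ at all eigenvalues yields equality in the optimized bound, since $E(G)$ is then caught between the optimal bound and the bound for that particular $r$.
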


\begin{proof}
Equality occurs if and only if the spectra of $G$ is contained in $\{\pm \frac{\sqrt{B-A}}{\sqrt{C-B}}, \pm \Delta\}$.

If $\Delta$ is not an eigenvalue of $G$ then $-\Delta$ is not an eigenvalue either, so $G$ has exactly two eigenvalues. This implies $G$ is complete, however complete graphs have $\Delta$ as an eigenvalue. We conclude $\Delta$ is an eigenvalue, meaning $G$ is regular.

If $-\Delta$ is an eigenvalue then $G$ is bipartite and therefore has symmetric spectra. If $G$ has exactly $2$ eigenvalues then $G$ is complete and bipartite (So $G=K_2$). If $G$ has exactly $4$ eigenvalues then $G$ is a regular bipartite graph with exactly $4$ eigenvalues and must therefore be the incidence graph of a symmetric $2-(v,k,\lambda)$ design, as shown in \cite{CveDo}.

If $-\Delta$ is not an eigenvalue then $G$ must have exactly $3$ eigenvalues or exactly $2$ eigenvalues. The only connected regular graphs with exactly $3$ eigenvalues are the strongly regular graphs, as is show in \cite{God}. Moreover  $\lambda=\mu$ is required so that one eigenvalue is the negative of the other. Finally, if $G$ has only two eigenvalues it must be a complete graph.

It should be clear that equality is indeed achieved in all these cases, as a polynomial $P_{r,\Delta}(x)$ can be found such that $P(x)=|x|$ for all eigenvalues $x$ of $G$.

\end{proof}

To minimize  \eqref{minimo} we notice the following.
\begin{lem}\label{decrece} 
The expression given in \eqref{minimo} is decreasing in $A$. 
\end{lem}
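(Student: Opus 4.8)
The plan is to avoid differentiating the closed-form expression \eqref{minimo} directly, and instead to exploit its origin as the minimum of a family of functions that are \emph{affine} in $A$. Recall from Theorem \ref{acotacion} and the discussion immediately preceding the statement that \eqref{minimo} arises as
$$
F(A) := \min_{r\in(0,1)} G_r(A), \qquad
G_r(A) := \frac{-1}{2r(r+1)^2}\,A + \frac{3r^2+2r+1}{2r(r+1)^2}\,B + \frac{r^2(2r+1)}{2r(r+1)^2}\,C,
$$
the minimum being attained at the interior point $r = \sqrt{B-A}/\sqrt{C-B}$. Throughout, $B$ and $C$ are held fixed and $A$ is the variable of interest.

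First I would observe that for each fixed $r\in(0,1)$ the map $A\mapsto G_r(A)$ is affine with leading coefficient $\frac{-1}{2r(r+1)^2}$. Since $r(r+1)^2>0$ on $(0,1)$, this coefficient is strictly negative, so every $G_r$ is a strictly decreasing function of $A$. The decisive step is then the elementary principle that a pointwise minimum of decreasing functions is decreasing. Concretely, given $A_1<A_2$ in the admissible range, let $r_1\in(0,1)$ attain $\min_r G_r(A_1)$ (we know the minimizer explicitly, so attainment is not an issue). Then
$$
F(A_1) = G_{r_1}(A_1) \;>\; G_{r_1}(A_2) \;\geq\; \min_{r} G_r(A_2) \;=\; F(A_2),
$$
where the strict inequality uses that $G_{r_1}$ is strictly decreasing together with $A_1<A_2$, and the final inequality is simply the definition of the minimum. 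This gives $F(A_1)>F(A_2)$, i.e. the expression \eqref{minimo} is (strictly) decreasing in $A$.

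The only point requiring care is the admissible range of $A$: the identification of \eqref{minimo} with $\min_{r\in(0,1)} G_r(A)$ is valid precisely where the minimizer $\sqrt{B-A}/\sqrt{C-B}$ genuinely lies in $(0,1)$, which, as claimed just before the statement, corresponds to $0\le \tfrac{B-A}{C-B}\le 1$; on that range the argument above applies verbatim. I expect the main obstacle to be nothing more than this bookkeeping of the constraints (and, implicitly, the sign of the denominator $A-2B+C$), all of which the envelope argument conveniently sidesteps. By contrast, a direct differentiation of \eqref{minimo} would force one to control both the square-root term $\sqrt{(B-A)(C-B)}$ and the behaviour of the vanishing denominator, and that is precisely the computation I would avoid.
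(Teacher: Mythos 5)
Your proposal is correct and takes essentially the same route as the paper: the paper likewise notes that for each fixed $r\in(0,1)$ the expression is decreasing in $A$ and then invokes the fact that a minimum over $r$ of such functions remains decreasing. Your write-up merely fills in the one-line chain of inequalities (via the minimizer $r_1$ at $A_1$) that the paper leaves implicit.
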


\begin{proof}

Note that the formula  $\frac{-1}{2r(r+1)^2}A  +(\frac{3r^2+2r+1}{2r(r+1)^2})B  + \frac{r^2(2r+1)}{2r(r+1)^2}C$ is decreasing in $A$ for every value of $r\in (0,1)$. The result follows since \eqref{minimo} is the the minimum of the expression over all $r\in (0,1)$ and thus should also be decreasing on $A$.
\end{proof}

Thus, from \eqref{AQ}, one should expect that having quadrangles should decrease the energy. 

\subsection{$d$-regular graphs}

Finally, we may recover the result in \cite{Dam}. Indeed, for the case of regular graphs we have 
\begin{align*}
A = \frac{2nd^2+nd +8Q}{d^3},\quad
B = n, \quad
C = nd.
\end{align*}
Lemma \ref{decrece} shows that, for $n$ and $d$ are fixed,  \eqref{minimo} is decreasing in $Q$ and then $Q=0$ gives a bound for all regular graphs. In this case the minimum value simplifies and is reached at $\frac{1}{d}\sqrt{d-1}$. 
\begin{cor}
For a regular graph $G$ one has
\begin{equation*}
E(G) \leq  n \frac{d+(d^2 -d)\sqrt{d-1}}{d^2 -d +1}
\end{equation*}
with equality if and only if $G$ is the incidence graph of a symmetric $2-(v,d,1)$ design.
\end{cor}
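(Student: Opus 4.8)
The plan is to specialize the bound \eqref{minimo} to the regular case and then optimize over the number $Q$ of quadrangles. Using \eqref{AQ} together with $2m = nd$ and $Z = nd^2$, a $d$-regular graph on $n$ vertices has $B = n$ and $C = nd$ independent of $Q$, while $A = (2nd^2 - nd + 8Q)/d^3$ is an increasing affine function of $Q$. Since \eqref{minimo} records the value of the bound of Theorem \ref{acotacion} at its optimal $r$, Lemma \ref{decrece} tells us this value is decreasing in $A$, hence decreasing in $Q$. Thus, among all $d$-regular graphs on $n$ vertices the right-hand side of \eqref{minimo} is largest at $Q = 0$, and evaluating there produces an upper bound on $E(G)$ valid for every $d$-regular graph irrespective of its actual value of $Q$.

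First I would record the values at $Q = 0$: there $A = n(2d-1)/d^2$, so that $B - A = n(d-1)^2/d^2$ and $C - B = n(d-1)$. Consequently the optimal radius is $r = \sqrt{B-A}/\sqrt{C-B} = \tfrac{1}{d}\sqrt{d-1}$, in agreement with the value quoted just before the corollary. Substituting into \eqref{minimo}, I would pull $n^2$ out of the numerator and $n/d^2$ out of the denominator; the cross term $\sqrt{B-A}\sqrt{C-B}$ equals $n(d-1)^{3/2}/d$, and after cancelling the factor $(d-1)$ arising from $1-d$ in the numerator against the factorization $d^3 - 2d^2 + 2d - 1 = (d-1)(d^2 - d + 1)$ in the denominator, the expression collapses to
\[
\frac{nd\bigl(1 + (d-1)^{3/2}\bigr)}{d^2 - d + 1} = n\,\frac{d + (d^2 - d)\sqrt{d-1}}{d^2 - d + 1},
\]
the claimed bound. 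This is the routine but slightly delicate algebraic core; the essential simplifications are the cubic factorization above together with $(d-1)^{3/2} = (d-1)\sqrt{d-1}$.

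For the equality statement I would combine the preceding proposition with the monotonicity reduction. Equality in the corollary demands simultaneously equality in Theorem \ref{acotacion} at $r = \tfrac{1}{d}\sqrt{d-1}$ and equality in the monotonicity step, i.e. $Q = 0$. The first condition, by the proposition, restricts $G$ to a complete graph, a strongly regular graph with $\lambda = \mu$, or the incidence graph of a symmetric $2$-design, with spectrum contained in $\{\pm\sqrt{d-1},\,\pm d\}$. Imposing $Q = 0$ then forces $\lambda = 1$: for a design incidence graph, two points lying in two common blocks would create a quadrangle, and for a strongly regular graph the spectral constraint forces $\mu = 1$, after which the friendship configuration leaves only the degenerate regular case $K_3$. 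Hence, among the non-degenerate families, only the $2$-$(v,d,1)$ designs survive, and by Section 2 their incidence graphs have spectrum exactly $\{\pm d,\,\pm\sqrt{d-1}\}$.

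The step I expect to be the main obstacle is precisely this equality analysis, not the inequality. One must verify that imposing $Q = 0$ genuinely singles out the $2$-$(v,d,1)$ designs and excludes the remaining equality cases of the proposition, up to the small degenerate graphs (such as $K_3$ when $d = 2$) that coincide with limiting cases. Checking that these exceptional regular graphs carry no quadrangles and either reduce to a design incidence graph or are ruled out is the one place where a little care beyond bookkeeping is needed; by contrast the inequality itself is an immediate consequence of Lemma \ref{decrece}.
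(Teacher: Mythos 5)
Your proposal is correct and takes essentially the same route as the paper: specialize to regular graphs, invoke Lemma \ref{decrece} to reduce to $Q=0$, and evaluate \eqref{minimo} at the optimal $r=\tfrac{1}{d}\sqrt{d-1}$. You additionally supply what the paper leaves implicit, namely the algebraic simplification via $d^3-2d^2+2d-1=(d-1)(d^2-d+1)$ and the equality analysis through the proposition (with the $K_3$ degeneracy flagged), and your $A=(2nd^2-nd+8Q)/d^3$ corrects the sign typo in the paper's displayed formula.
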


 \section*{Acknowledgment}

 O. Arizmendi was supported by a CONACYT Grant No. 222668 and by the  European Union's Horizon 2020 research and innovation programme under the Marie Sk\l{}odowska-Curie grant agreement 734922, during the writing of this paper.

\end{document}